\numberwithin{equation}{subsection}
\theoremstyle{plain}
\newtheorem{theorem}{Theorem}[section]
\newtheorem{corollary}[theorem]{Corollary}
\newtheorem{lemma}[theorem]{Lemma}
\theoremstyle{definition}
\newtheorem{definition}[theorem]{Definition}
\theoremstyle{remark}
\newtheorem{remark}[theorem]{Remark}
\newtheorem{example}[theorem]{Example}
\numberwithin{equation}{section}
\begin{document}

%Relatively Bounded Perturbations of Correct Restrictions and Extensions of Linear Operators

%---------------------------------------------------------------------------

\begin{center}\textbf{Correct Singular Perturbations of the Laplace Operator with the Spectrum of the Unperturbed Operator}\end{center}

\begin{center}\textbf{B.\,N. Biyarov, D.\,A. Svistunov, G.\,K. Abdrasheva}\end{center}

%\begin{center}May 20, 2019\end{center}
\textbf{Key words:} Maximal (minimal) operator, singular perturbation of the operator, correct restriction, correct extension, system of eigenvectors
\\ \\
\textbf{AMS Mathematics Subject Classification:} Primary 35B25; Secondary 47Axx
\\

%%% ----------------------------------------------------------------------

\begin{abstract}
The work is devoted to the study of Laplace operator when the potential is a singular generalized function and plays the role of a singular perturbation of a Laplace operator. Abstract theorem obtained earlier by the authors B.N.\,Biyarov and G.K.\,Abdrasheva applies to this. The main purpose of the study is the spectral issue. Singular perturbations for differential operators have been studied by many authors for the mathematical substantiation of solvable models of quantum mechanics, atomic physics, and solid state physics. In all these cases, the problems were self-adjoint. In this paper, we consider non-self-adjoint singular perturbation problems. A new method has been developed that allows investigating the considered problems.
\end{abstract}

\section{Introduction}
\label{sec:1} 
Let us present some definitions, notation, and terminology.

In a Hilbert space $H$, we consider a linear operator $L$ with  domain $D(L)$ and range $R(L)$. By the \textit{kernel} of the operator $L$ we mean the set
\[\mbox{Ker}\,L=\bigl\{f\in D(L): \; Lf=0\bigl\}.\]

\begin{definition}
\label{def1}  
An operator $L$ is called a \textit{restriction} of an operator $L_1$, and $L_1$ is called an \textit{extension} of an operator $L$, briefly $L\subset L_1$, if:

1) $D(L)\subset D(L_1)$,

2) $Lf=L_1f$ for all $f$ from $D(L)$.
\end{definition}

\begin{definition}
\label{def2}
A linear closed operator $L_0$ in a Hilbert space $H$ is called \textit{minimal} if  there exists a bounded inverse operator $L_0^{-1}$ on $R(L_0)$ and $R(L_0) \not=H$.
\end{definition}

\begin{definition}
\label{def3}
A linear closed operator $\widehat{L}$ in a Hilbert space $H$ is called \textit{maximal} if $R(\widehat{L})=H$ and $\mbox{Ker}\, \widehat{L} \not=\{0\}$. 
\end{definition}

\begin{definition}
\label{def4}
A linear closed operator $L$ in a Hilbert space $H$ is called \textit{correct} if there exists a bounded inverse operator $L^{-1}$ defined on all of $H$. 
\end{definition}

\begin{definition}
\label{def5}
We say that a correct operator $L$ in a Hilbert space $H$ is a \textit{correct extension} of minimal operator $L_0$ (\textit{correct restriction} of maximal operator $\widehat{L}$) if $L_0\subset L$ ($L\subset \widehat{L}$).
\end{definition}

\begin{definition}
\label{def6}
We say that a correct operator $L$ in a Hilbert space $H$ is a \textit{boundary correct} extension of a minimal operator $L_0$ with respect to a maximal operator $\widehat{L}$ if $L$ is simultaneously a correct restriction of the maximal operator $\widehat{L}$ and a correct extension of the minimal operator $L_0$, that is, $L_0\subset L \subset \widehat{L}$.
\end{definition}

Let $\widehat{L}$ be a maximal linear operator in a Hilbert space $H$, let $L$ be any known correct restriction of $\widehat{L}$, and let $K$ be an arbitrary linear bounded (in $H$) operator satisfying the following condition:
\[
R(K)\subset \mbox{Ker}\, \widehat{L}.
\]
Then the operator $L_K^{-1}$ defined by the formula (see {\cite{Kokebaev}})
\begin{equation}\label{eq:1.1}
L_K^{-1}f=L^{-1}f+Kf,
\end{equation}
describes the inverse operators to all possible correct restrictions $L_K$ of $\widehat{L}$, i.e., $L_K\subset \widehat{L}$.

Let $L_0$ be a minimal operator in a Hilbert space $H$, let $L$ be any known correct extension of $L_0$, and let $K$ be a linear bounded operator in $H$ satisfying the conditions

a) $R(L_0)\subset \mbox{Ker}\,K$,

b) $\mbox{Ker}\,(L^{-1}+K)=\{0\}$,
\newline
then the operator $L_K^{-1}$ defined by formula \eqref{eq:1.1}
describes the inverse operators to all possible correct extensions $L_K$ of  $L_0$ (see {\cite{Kokebaev}}).

Let $L$ be any known boundary correct extension of $L_0$, i.e., $L_0\subset L\subset \widehat{L}$. The existence of at least one boundary correct extension $L$ was proved by Vishik in {\cite{Vishik}}. Let $K$ be a linear bounded (in $H$) operator satisfying the conditions

a) $R(L_0)\subset \mbox{Ker}\,K$,

b) $R(K)\subset \mbox{Ker}\,\widehat{L}$,
\newline
then the operator $L_K^{-1}$ defined by formula \eqref{eq:1.1}
describes the inverse operators to all possible boundary correct extensions $L_K$ of $L_0$ (see {\cite{Kokebaev}}). 

\begin{definition}
\label{def7} A bounded operator $A$ in a Hilbert space $H$ is called \textit{quasinilpotent} if its spectral
radius is zero, that is, the spectrum consists of the single point zero.
\end{definition}

\begin{definition}
\label{def8} An operator $A$ in a Hilbert space $H$ is called a \textit{Volterra operator} if $A$ is compact and quasinilpotent.
\end{definition}

\begin{definition}
\label{def9} A correct restriction $L$ of a maximal operator $\widehat{L} \;(L\subset \widehat{L})$, a correct extension $L$ of a minimal operator $L_0 \; (L_0 \subset L)$ or a boundary correct extension $L$ of a minimal operator $L_0$ with respect to a maximal operator $\widehat{L} \; (L_0\subset L \subset \widehat{L})$, will be called \textit{Volterra} if the inverse operator $L^{-1}$ is a
Volterra operator.
\end{definition}

\begin{definition} \label{def10} 
A densely defined closed linear operator $A$ in a Hilbert space $H$ is called \textit{formally normal} if
\[ D(A)\subset D(A^*), \quad \|Af\|=\|A^*f\| \quad \mbox{for all} \; f \in D(A). \]
\end{definition}

\begin{definition} \label{def11} 
A formally normal operator $A$ is called \textit{normal} if
\[D(A)=D(A^*).\]
\end{definition}

\section{Preliminaries}
\label{sec:2}
In this section, we present some results for correct restrictions and extensions {\cite{Biyarov3}} which are used in Section 3.

Let $L_0$ be some minimal operator, and let $M_0$ be another minimal operator related to $L_0$ by the equation $(L_0u, v)=(u, M_0v)$ for all $u\in D(L_0)$ and $v\in D(M_0)$. Then $\widehat{L}=M_0^*$ and $\widehat{M}=L_0^*$ are maximal operators such that $L_0\subset \widehat{L}$ and $M_0\subset \widehat{M}$. The existence of at least one boundary correct extension $L$ was proved by Vishik in {\cite{Vishik}}, that is, $L_0\subset L\subset \widehat{L}$.
In this case, $L^*$ is a boundary correct extension of the minimal operator $M_0$, that is, $M_0\subset L^*\subset \widehat{M}$. 
The inverse operators to all possible correct restrictions $L_K$ of the maximal operator $\widehat{L}$ have the form \eqref{eq:1.1}, then $D(L_K)$ is dense in $H$ if and only if $\mbox{Ker}\, (I+K^*L^*)=\{0\}$.
Thus, it is obvious that any correct extension $M_K$ of $M_0$ is the adjoint of some correct restriction $L_K$ with dense domain, and vice versa {\cite{Biyarov1}}. Finally, all possible correct extensions $M_K$ of $M_0$ have inverses of the form

\begin{equation}\label{eq:2.1}
M_K^{-1}f=(L_K^*)^{-1}f=(L^*)^{-1}f+K^*f,
\end{equation}
where $K$ is an arbitrary bounded linear operator in $H$ with $R(K)\subset \mbox{Ker}\, \widehat{L}$ such that $\mbox{Ker}\, (I+K^*L^*)=\{0\}$. It is also clear that $R(M_0)\subset \mbox{Ker}\, K^*$.
In particular, $M_K$ is a boundary correct extension of $M_0$ if and only if $R(M_0)\subset \mbox{Ker}\, K^*$ and $R(K^*)\subset \mbox{Ker}\, \widehat{M}$.

\begin{lemma}\label{lem:2.0}
Let $L$ be a densely defined correct restriction of the maximal operator $\widehat{L}$ in a Hilbert space $H$. Then the operator $KL$ is bounded on $D(L)$ (that is, $\overline{KL}$ is bounded in  $H$)  if and only if 
\[R(K^*)\subset D(L^*).\]
\end{lemma}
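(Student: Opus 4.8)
The plan is to prove the equivalence $\overline{KL}$ bounded $\iff R(K^*)\subset D(L^*)$ by exploiting the adjoint relation between the two composite operators. First I would recall that since $L$ is correct and densely defined, $L^{-1}$ is bounded and everywhere defined, and $L^*$ is likewise a correct (densely defined) operator with $(L^*)^{-1}=(L^{-1})^*$. The natural object to look at is the operator $KL$ on $D(L)$; I would like to compare it with $(L^*K^*)^*$. The key identity is that for the bounded operator $K$ one has $(KL)^{*}\supset L^*K^*$, and more usefully, for any $f\in H$ and $u\in D(L)$,
\[
(KLu, f) = (Lu, K^*f).
\]
So $KL$ is bounded on $D(L)$ precisely when the linear functional $u\mapsto (Lu,K^*f)$ is bounded on $D(L)$ (in the $H$-norm) uniformly appropriately — i.e. when $K^*f$ lies in the domain of the adjoint of the operator $L$ viewed with graph-norm considerations. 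This is where the correctness of $L$ does the work: because $L^{-1}$ is bounded and surjective, $D(L)$ equipped with the graph norm is, up to equivalence, just $H$ transported by $L^{-1}$, so boundedness of $u\mapsto(Lu,K^*f)$ in the $H$-norm of $u$ is the relevant condition, and that is exactly $K^*f\in D(L^*)$.

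Concretely, here is the order of steps I would carry out. Step 1: Show $(\Leftarrow)$. Assume $R(K^*)\subset D(L^*)$. For $u\in D(L)$ write $g=Lu$, so $u=L^{-1}g$. Then $KLu = Kg$, and I want to bound $\|KLu\|$ by $\|u\|$. Instead, I compute via duality: for arbitrary $f\in H$,
\[
(KLu,f)=(Lu,K^*f)=(g,K^*f)=(L^{-1}g, L^*K^*f)=(u, L^*K^*f),
\]
where the third equality uses $K^*f\in D(L^*)$ together with $g\in R(L)=D(L^*{}^{*})$-type pairing, i.e. the definition of $L^*$ applied to the pair $(L^{-1}g, g)=(L^{-1}g, L(L^{-1}g))$. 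Hence $\|KLu\|=\sup_{\|f\|\le 1}|(u,L^*K^*f)|\le \|u\|\,\|L^*K^*\|$, and $L^*K^*$ is bounded because $K^*$ is bounded with range in $D(L^*)$ and... here I must be slightly careful: $L^*$ need not be bounded, so I would instead argue that $L^*K^*$ is closed (composition of bounded $K^*$ with closed $L^*$, with $R(K^*)\subset D(L^*)$) and everywhere defined, hence bounded by the closed graph theorem. That gives $\overline{KL}$ bounded.

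Step 2: Show $(\Rightarrow)$. Assume $\overline{KL}$ is bounded, say $\|KLu\|\le C\|u\|$ for $u\in D(L)$. Fix $f\in H$; I must show $K^*f\in D(L^*)$, i.e. that $u\mapsto (Lu, K^*f)$ is $\|\cdot\|$-bounded on $D(L)$. But $(Lu,K^*f)=(KLu,f)$ and $|(KLu,f)|\le \|KLu\|\,\|f\|\le C\|f\|\,\|u\|$, which is exactly the required bound. Since $L$ is densely defined and closed, $D(L^*)=\{h: u\mapsto (Lu,h)\text{ is }H\text{-bounded on }D(L)\}$, so $K^*f\in D(L^*)$; as $f$ was arbitrary, $R(K^*)\subset D(L^*)$.

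The main obstacle I anticipate is the bookkeeping in Step 1 around whether $L^*K^*$ is automatically bounded: one cannot simply say "bounded times bounded," since $L^*$ is generally unbounded, so the argument must route through the closed graph theorem using that $L^*$ is closed, $K^*$ is bounded and everywhere defined, and $R(K^*)\subset D(L^*)$ makes the composition everywhere defined — only then does closedness of the composite upgrade to boundedness. A secondary subtlety is justifying the pairing identity $(g,K^*f)=(L^{-1}g,L^*K^*f)$ for all $g\in H$: this uses $R(L)=H$ (surjectivity of $L$, from correctness) so that every $g$ is of the form $Lu$, together with the defining property of $L^*$ applied to $h=K^*f\in D(L^*)$. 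Once these two points are pinned down the rest is routine.
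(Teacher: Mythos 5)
Your proof is correct and follows essentially the same route as the paper: the forward direction via the identity $(KLu,f)=(u,L^*K^*f)$ together with boundedness of the everywhere-defined closed operator $L^*K^*$, and the converse via the defining characterization of $D(L^*)$ applied to $(Lg,K^*f)=(KLg,f)$. Your explicit appeal to the closed graph theorem to justify boundedness of $L^*K^*$ is a point the paper glosses over, but it is the same argument.
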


\begin{proof}
Let $R(K^*)\subset D(L^*)$. Then, by virtue of  $(KL)^*=L^*K^*$, we have $\overline{KL}$ is bounded in $H$, where $\overline{KL}$ is the closure of the operator $KL$ in $H$. Here we used the boundedness of the operator $L^*K^*$. Then the operator $KL$ is bounded on $D(L)$.
Conversely, let the $KL$ be bounded on $D(L)$. Then $\overline{KL}$ is bounded in $H$, by virtue of $(KL)^*=(\overline{KL})^*$ and that $(KL)^*$ is defined on the whole space $H$. Then the operator $K^*$ translates any element $f$ from $H$ to $D(L^*)$. Indeed, for any element $g$ of $D(L)$ we have
\[(Lg, K^*f)=(KLg, f)=(g, (KL)^*f).\]
Therefore, $K^*f$  belongs to the domain $D(L^*)$. The Lemma \ref{lem:2.0} is proved. 
\end{proof}

\begin{lemma}\label{lem:2.1}
Let $L_K$ be a densely defined correct restriction of the maximal operator $\widehat{L}$ in a Hilbert space $H$. Then $D(L^*)= D(L_K^*)$ if and only if $R(K^*)\subset D(L^*)\cap D(L_K^*)$, where $L$ and $K$ are the operators from the representation \eqref{eq:1.1}.
\end{lemma}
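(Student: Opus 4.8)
The plan is to pass to inverses, take adjoints term by term in \eqref{eq:1.1}, and then reduce the claim to an elementary statement about ranges of bounded operators.

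First I would observe that, since $L_K$ is a densely defined correct restriction of $\widehat{L}$, the operator $L_K^{-1}$ is bounded on all of $H$ and has dense range $R(L_K^{-1})=D(L_K)$; consequently $(L_K^{-1})^{*}=(L_K^{*})^{-1}$ is a bounded operator on $H$. The same remarks apply to the densely defined correct restriction $L$, so $(L^{-1})^{*}=(L^{*})^{-1}$. Taking the adjoint of \eqref{eq:1.1} then yields
\[
(L_K^{*})^{-1}f=(L^{*})^{-1}f+K^{*}f,\qquad f\in H,
\]
which is precisely relation \eqref{eq:2.1}. In particular $K^{*}=(L_K^{*})^{-1}-(L^{*})^{-1}$ is a difference of two everywhere-defined bounded operators, and $D(L^{*})=R\bigl((L^{*})^{-1}\bigr)$, $D(L_K^{*})=R\bigl((L_K^{*})^{-1}\bigr)$, both of which are linear subspaces of $H$.

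For necessity I would assume $D(L^{*})=D(L_K^{*})$, i.e. $R\bigl((L^{*})^{-1}\bigr)=R\bigl((L_K^{*})^{-1}\bigr)$. Then for each $f\in H$ the vector $K^{*}f=(L_K^{*})^{-1}f-(L^{*})^{-1}f$ is a difference of two elements of the common linear subspace $R\bigl((L^{*})^{-1}\bigr)=R\bigl((L_K^{*})^{-1}\bigr)=D(L^{*})\cap D(L_K^{*})$, hence lies in it; thus $R(K^{*})\subset D(L^{*})\cap D(L_K^{*})$. For sufficiency I would assume $R(K^{*})\subset D(L^{*})\cap D(L_K^{*})$. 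Then the identity $(L_K^{*})^{-1}f=(L^{*})^{-1}f+K^{*}f$ shows $R\bigl((L_K^{*})^{-1}\bigr)\subset R\bigl((L^{*})^{-1}\bigr)$, since both summands lie in $R\bigl((L^{*})^{-1}\bigr)$, while $(L^{*})^{-1}f=(L_K^{*})^{-1}f-K^{*}f$ gives the reverse inclusion; hence $D(L^{*})=D(L_K^{*})$.

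The argument is genuinely short, so the only step demanding real care is the adjoint bookkeeping at the start: one must be sure that $L$ and $L_K$ being densely defined correct restrictions legitimizes the identifications $(L^{-1})^{*}=(L^{*})^{-1}$, $(L_K^{-1})^{*}=(L_K^{*})^{-1}$ and the term-by-term adjoint of \eqref{eq:1.1} — here the density of $D(L_K)$, which is part of the hypothesis, is exactly what forces $L_K^{-1}$ to have dense range. Everything after that is elementary manipulation with linear subspaces of $H$, using no more than that $R\bigl((L^{*})^{-1}\bigr)$ and $R\bigl((L_K^{*})^{-1}\bigr)$ are linear and that $K^{*}$ is their difference.
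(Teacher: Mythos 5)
Your proof is correct and follows essentially the same route as the paper: take adjoints in \eqref{eq:1.1} to get $(L_K^*)^{-1}=(L^*)^{-1}+K^*$, read off necessity from $K^*$ being the difference of the two resolvents, and get sufficiency from the two inclusions of ranges. The only cosmetic difference is that the paper writes the sufficiency step in the factored form $(L^*)^{-1}(I+L^*K^*)f$ and $(L_K^*)^{-1}(I-L_K^*K^*)f$, whereas you argue directly with $R\bigl((L^*)^{-1}\bigr)$ and $R\bigl((L_K^*)^{-1}\bigr)$ as linear subspaces; both versions use the hypothesis $R(K^*)\subset D(L^*)\cap D(L_K^*)$ in exactly the same way.
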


\begin{proof}
If $D(L^*)= D(L_K^*)$ then from the representation \eqref{eq:1.1}, we easily get 
\[R(K^*)\subset D(L^*)\cap D(L_K^*)=D(L^*)= D(L_K^*)\]
Let us prove the converse. If
\[R(K^*)\subset D(L^*)\cap D(L_K^*),\]
then we obtain
\begin{equation}\label{eq:2.2}
(L_K^*)^{-1}f=(L^*)^{-1}f+K^*f=(L^*)^{-1}(I+L^*K^*)f,
\end{equation}
\begin{equation}\label{eq:2.3}
(L^*)^{-1}f=(L_K^*)^{-1}f-K^*f=(L_K^*)^{-1}(I-L_K^*K^*)f,
\end{equation}
for all $f$ in $H$. 
It follows from \eqref{eq:2.2} that $D(L_K^*)\subset D(L^*)$, and from \eqref{eq:2.3} it implies that $D(L^*)\subset D(L_K^*)$. Thus $D(L^*)= D(L_K^*)$. 
The Lemma \ref{lem:2.1} is proved.
\end{proof}

\begin{corollary}\label{cor:2.3}
Let $L_K$ be densely defined correct rectriction of the maximal operator $\widehat{L}$ in a Hilbert space $H$. If $R(K^*)\subset D(L^*)$ and $\overline{KL}$ is compact operator in $H$ then $D(L^*)=D(L_K^*)$.
\end{corollary}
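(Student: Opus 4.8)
The plan is to deduce the statement from Lemma~\ref{lem:2.1}. That lemma yields $D(L^*)=D(L_K^*)$ once we know $R(K^*)\subset D(L^*)\cap D(L_K^*)$; since $R(K^*)\subset D(L^*)$ is assumed, it suffices to prove $R(K^*)\subset D(L_K^*)$. Because $L_K$ is a densely defined correct restriction, $L_K^*$ is correct and, by \eqref{eq:2.1}, $D(L_K^*)=R\bigl((L_K^*)^{-1}\bigr)=\{(L^*)^{-1}g+K^*g:\ g\in H\}$. Hence, for a fixed $f\in H$, the inclusion $K^*f\in D(L_K^*)$ amounts to finding $g\in H$ with $(L^*)^{-1}g+K^*g=K^*f$; applying $L^*$, which is legitimate since $K^*(f-g)\in D(L^*)$ by hypothesis, this is equivalent to $(I+L^*K^*)g=L^*K^*f$. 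So everything reduces to showing that $I+L^*K^*$ maps $H$ onto $H$.

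First I would identify $L^*K^*$ with $(\overline{KL})^*$. As in the proof of Lemma~\ref{lem:2.0}, for $g\in D(L)$ and $f\in H$ we have $(\overline{KL}\,g,f)=(KLg,f)=(Lg,K^*f)=(g,L^*K^*f)$ using $K^*f\in D(L^*)$, and since $D(L)$ is dense this gives $(\overline{KL})^*=L^*K^*$ on all of $H$. As $\overline{KL}$ is compact, Schauder's theorem shows $L^*K^*$ is compact, so $I+L^*K^*$ is a compact perturbation of the identity; by the Fredholm alternative it is onto if and only if it is one-to-one.

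It remains to check $\mbox{Ker}\,(I+L^*K^*)=\{0\}$. Suppose $x\in H$ and $x+L^*K^*x=0$. Put $y=K^*x$, so $y\in D(L^*)$ by hypothesis. Applying $K^*$ to $L^*K^*x=-x$ gives $K^*L^*y=-y$, i.e. $y\in\mbox{Ker}\,(I+K^*L^*)$. But $L_K$ is densely defined, so by the density criterion recalled in Section~\ref{sec:2} we have $\mbox{Ker}\,(I+K^*L^*)=\{0\}$; hence $y=0$, and then $x=-L^*K^*x=-L^*y=0$. Thus $I+L^*K^*$ is injective, hence surjective. Consequently, for every $f\in H$ the equation $(I+L^*K^*)g=L^*K^*f$ is solvable in $H$, which by the reduction above gives $K^*f\in D(L_K^*)$. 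Therefore $R(K^*)\subset D(L^*)\cap D(L_K^*)$, and Lemma~\ref{lem:2.1} yields $D(L^*)=D(L_K^*)$.

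I expect the one delicate point to be the passage from $\mbox{Ker}\,(I+K^*L^*)=\{0\}$ to $\mbox{Ker}\,(I+L^*K^*)=\{0\}$: because $L^*$ is unbounded, $L^*K^*$ and $K^*L^*$ are not similar operators, so the usual reasoning behind $\sigma(AB)\cup\{0\}=\sigma(BA)\cup\{0\}$ has to be carried out elementwise, and one must verify at each step that the vector handed to $L^*$ really lies in $D(L^*)$; this is precisely why the hypothesis $R(K^*)\subset D(L^*)$ is used, both to make $L^*K^*$ bounded and to place $y=K^*x$ in $D(L^*)$. The remaining ingredients --- the identity $L^*K^*=(\overline{KL})^*$, Schauder's theorem, and the Fredholm alternative --- are standard.
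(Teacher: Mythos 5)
Your proof is correct and follows essentially the same route as the paper: both hinge on the compactness of $L^*K^*=(\overline{KL})^*$ (via Schauder) and on deducing from the density of $D(L_K)$ that $I+L^*K^*$ is surjective, after which \eqref{eq:2.2} (equivalently, Lemma~\ref{lem:2.1}) yields $D(L^*)=D(L_K^*)$. The only cosmetic difference is that you obtain surjectivity from injectivity plus the Fredholm alternative, whereas the paper argues that the range of $I+L^*K^*$ is both closed and dense.
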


\begin{proof}
Compactness of $\overline{KL}$ implies compactness of $L^*K^*$. Then $R(I+L^*K^*)$  is a closed subspace in $H$. It follows from densely definiteness of $L_K$  that $R(I+L^*K^*)$ is a dense set in $H$. Hence $R(I+L^*K^*)=H$. Then from the equality \eqref{eq:2.2} we get $D(L^*)=D(L_K^*)$. 
The Corollary \ref{cor:2.3} of Lemma \ref{lem:2.1} is proved.
\end{proof}

\begin{lemma}\label{lem:2.2}
If $R(K^*)\subset D(L^*)\cap D(L_K^*)$ then a bounded operators $I+L^*K^*$ and $I-L_K^*K^*$ from \eqref{eq:2.2} and \eqref{eq:2.3}, respectively, have a bounded inverse defined on $H$.
\end{lemma}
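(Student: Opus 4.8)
The plan is to prove that $I+L^*K^*$ and $I-L_K^*K^*$ are two-sided inverses of one another. First I would recall that, as shown in the proof of Lemma~\ref{lem:2.1}, under the hypothesis $R(K^*)\subset D(L^*)\cap D(L_K^*)$ one has $D(L^*)=D(L_K^*)$ and both identities \eqref{eq:2.2} and \eqref{eq:2.3} are valid for every $f\in H$. Since $K^*$ is bounded and $L^*$, $L_K^*$ are closed, the everywhere-defined operators $L^*K^*$ and $L_K^*K^*$ are bounded by the closed graph theorem, so $I+L^*K^*$ and $I-L_K^*K^*$ are bounded on $H$. Moreover $L^*$ and $L_K^*$, being adjoints of the densely defined correct operators $L$ and $L_K$, are themselves correct; hence $(L^*)^{-1}$ and $(L_K^*)^{-1}$ are bounded, defined on all of $H$, and injective, with $L^*(L^*)^{-1}=L_K^*(L_K^*)^{-1}=I$ on $H$.

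Next I would substitute each of \eqref{eq:2.2}, \eqref{eq:2.3} into the other. Fix $f\in H$ and put $g=(I+L^*K^*)f$. Applying \eqref{eq:2.3} to this $g$ and then using \eqref{eq:2.2} to rewrite the left-hand side gives
\[(L_K^*)^{-1}f=(L^*)^{-1}(I+L^*K^*)f=(L_K^*)^{-1}(I-L_K^*K^*)(I+L^*K^*)f.\]
Applying $L_K^*$ to both ends (equivalently, using injectivity of $(L_K^*)^{-1}$) yields $(I-L_K^*K^*)(I+L^*K^*)f=f$. Symmetrically, fixing $g\in H$, putting $f=(I-L_K^*K^*)g$ in \eqref{eq:2.2} and using \eqref{eq:2.3} gives $(L^*)^{-1}g=(L^*)^{-1}(I+L^*K^*)(I-L_K^*K^*)g$, and injectivity of $(L^*)^{-1}$ forces $(I+L^*K^*)(I-L_K^*K^*)g=g$. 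Since $f,g\in H$ were arbitrary, $I+L^*K^*$ and $I-L_K^*K^*$ are mutually inverse bounded operators on $H$, which is the assertion.

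The argument is entirely algebraic once the domain bookkeeping is in place, so I do not expect a genuine obstacle; the only points needing care are that every composition written above is legitimate — guaranteed by $R(K^*)\subset D(L^*)\cap D(L_K^*)$ together with $D(L^*)=D(L_K^*)$ from Lemma~\ref{lem:2.1} — and that \eqref{eq:2.2}--\eqref{eq:2.3} genuinely hold on all of $H$ rather than on a proper subspace. Alternatively, one can exhibit the inverses explicitly: applying $L^*$ to \eqref{eq:2.2} gives $L^*(L_K^*)^{-1}=I+L^*K^*$ on $H$ (the composition is defined because $R((L_K^*)^{-1})=D(L_K^*)=D(L^*)$), and likewise $L_K^*(L^*)^{-1}=I-L_K^*K^*$, whence $(I+L^*K^*)^{-1}=L_K^*(L^*)^{-1}$ and $(I-L_K^*K^*)^{-1}=L^*(L_K^*)^{-1}$.
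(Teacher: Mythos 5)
Your proof is correct, but it takes a different route from the paper's. The paper verifies invertibility in two separate steps: injectivity of $I+L^*K^*$ and $I-L_K^*K^*$ is read off from \eqref{eq:2.2}--\eqref{eq:2.3} (a nonzero kernel element would be annihilated by the injective operators $(L_K^*)^{-1}$, $(L^*)^{-1}$), surjectivity follows because $R(I+L^*K^*)=L^*\bigl(R((L_K^*)^{-1})\bigr)=L^*(D(L^*))=H$ once $D(L^*)=D(L_K^*)$ is known from Lemma~\ref{lem:2.1}, and boundedness of the inverse is then obtained from closedness plus everywhere-definedness. You instead substitute \eqref{eq:2.2} and \eqref{eq:2.3} into one another and cancel the injective factors $(L_K^*)^{-1}$, $(L^*)^{-1}$ to conclude that $I+L^*K^*$ and $I-L_K^*K^*$ are mutual two-sided inverses. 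Your argument is more economical: it delivers boundedness of the inverse for free (the inverse \emph{is} the other bounded operator, so no closed-graph or closedness argument is needed at that stage), and it simultaneously establishes the identity $(I-L_K^*K^*)^{-1}=I+L^*K^*$, which the paper only records as a separate remark after the lemma. Both arguments rest on the same inputs — the validity of \eqref{eq:2.2}--\eqref{eq:2.3} on all of $H$ under the hypothesis $R(K^*)\subset D(L^*)\cap D(L_K^*)$, the injectivity of $(L^*)^{-1}$ and $(L_K^*)^{-1}$ coming from correctness and dense definiteness, and the boundedness of $L^*K^*$, $L_K^*K^*$ via Lemma~\ref{lem:2.0} or the closed graph theorem — so nothing extra is being assumed.
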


\begin{proof}
By virtue of the density of the domains of the operators $L_K^*$ and $L^*$ imply that the operators $I+L^*K^*$ and $I-L_K^*K^*$ are invertible. Since from \eqref{eq:2.2} and \eqref{eq:2.3} we have 
$\mbox{Ker}\, (I+L^*K^*)=\{0\}$ and $\mbox{Ker}\, (I-L_K^*K^*)=\{0\}$, respectively.
From the representations \eqref{eq:2.2} and \eqref{eq:2.3} we also note that $R(I+L^*K^*)=H$ and $R(I-L_K^*K^*)=H$, since $D(L^*)= D(L_K^*)$.
The inverse operators $(I+L^*K^*)^{-1}$ and $(I-L_K^*K^*)^{-1}$ of the closed operators $I-L_K^*K^*$
 and $I+L^*K^*$, respectively, are closed. Then the closed operators $(I+L^*K^*)^{-1}$ and $(I-L_K^*K^*)^{-1}$, defined on the whole of $H$, are bounded. The Lemma \ref{lem:2.2} is proved.
\end{proof}

Under the conditions of Lemma \ref{lem:2.2} the operators $KL$ and $KL_K$ will be (see {\cite{Biyarov2}}) a part of bounded operators $\overline{KL}$ and $\overline{KL_K}$, respectively, where the bar denotes the closure of operators in $H$. Thus $(I-L_K^*K^*)^{-1}=I+L^*K^*$ and $(I-\overline{KL_K})^{-1}=I+\overline{KL}$.

Next we consider the following statement

\begin{theorem}\label{theorem:2.1}
Let $L_K$ be a densely defined correct restriction of the maximal operator $\widehat{L}$ in a Hilbert space $H$. If $R(K^*)\subset D(L^*)\cap D(L_K^*)$, where $L$ and $K$ are the operators from the representation \eqref{eq:1.1} then
\begin{enumerate}
\item{The operator $B_K=(I+\overline{KL})L_K$ is relatively bounded correct perturbations of correct restriction $L_K$ and the spectra of the operators $B_K$ and $L$ coincide, that is, $\sigma (B_K)=\sigma (L)$;}
\item{The operator $L$ is quasinilpotent (the Volterra) boundary correct extension of $L_0$, and $B_K$ is a quasinilpotent (the Volterra) correct operator simultaneously;}
\item{If $L$ is an operator with discrete spectrum then the system of root vectors of the operator $L$ is complete (the basis) in $H$ if and only if the system of root vectors of the operator $B_K$ is complete (the basis) in $H$;}
\item{In particular, when $L$ is a normal operator with discrete spectrum, then the system of root vectors of the operator $B_K$ form a Riesz basis in $H$.}
\end{enumerate}
\end{theorem}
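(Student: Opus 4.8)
The plan is to reduce all four assertions to one structural fact: \emph{$B_K$ is similar to $L$ through a bounded operator with bounded inverse}. Precisely, I will prove
\[
B_K=(I+\overline{KL})\,L\,(I+\overline{KL})^{-1}=S\,L\,S^{-1},\qquad S:=I+\overline{KL},
\]
after which items 1--4 become routine consequences of the fact that $S$ is a topological automorphism of $H$. To obtain this identity I first note that under the hypothesis $R(K^*)\subset D(L^*)\cap D(L_K^*)$, Lemma \ref{lem:2.0} applied once to $L$ and once to the densely defined correct restriction $L_K$ makes both $\overline{KL}$ and $\overline{KL_K}$ bounded on $H$, while the remark following Lemma \ref{lem:2.2} gives $(I+\overline{KL})^{-1}=I-\overline{KL_K}$; hence $S$ is bounded with bounded inverse. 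Since $L^{-1}$ maps $H$ into $D(L)=D(KL)$, on which $\overline{KL}$ agrees with $KL$, formula \eqref{eq:1.1} yields, for every $f\in H$,
\[
L_K^{-1}f=L^{-1}f+Kf=L^{-1}f+\overline{KL}\,L^{-1}f=(I+\overline{KL})L^{-1}f,
\]
i.e. $L_K^{-1}=SL^{-1}$, so $L_K=LS^{-1}$ and $B_K=(I+\overline{KL})L_K=S(LS^{-1})=SLS^{-1}$ with $D(B_K)=D(L_K)=S\,D(L)$.

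Item 1 is then immediate: if $\lambda\in\rho(L)$ then $(B_K-\lambda)^{-1}=S(L-\lambda)^{-1}S^{-1}\in B(H)$, and symmetrically using $L=S^{-1}B_KS$, so $\sigma(B_K)=\sigma(L)$; $B_K$ is correct because $B_K^{-1}=L_K^{-1}S^{-1}$ is bounded on all of $H$; and on $D(L_K)$ we have $B_Ku-L_Ku=(S-I)L_Ku=\overline{KL}\,L_Ku$, whence $\|(B_K-L_K)u\|\le\|\overline{KL}\|\,\|L_Ku\|$, the asserted relative boundedness with respect to $L_K$. For item 2, $B_K^{-1}=SL^{-1}S^{-1}$ is a similarity of bounded operators, so $\sigma(B_K^{-1})=\sigma(L^{-1})$ and $B_K^{-1}$ is compact if and only if $L^{-1}$ is; since ``$L$ quasinilpotent'' means $\sigma(L^{-1})=\{0\}$ and ``$L$ Volterra'' adds compactness of $L^{-1}$, both properties transfer between $L$ and $B_K$ in both directions (the statement that $L$ is a boundary correct extension of $L_0$ is part of the standing hypotheses on $L$).

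For items 3 and 4 the point is that $S$ maps the root subspace of $L$ at an eigenvalue $\lambda$ bijectively onto that of $B_K$ at $\lambda$, since $(L-\lambda)^m v=0\iff (B_K-\lambda)^mSv=S(L-\lambda)^mv=0$, and discreteness of $\sigma(L)$, which is shared by $B_K$, makes these subspaces finite-dimensional, so algebraic multiplicities agree (the Riesz projections satisfy $P^{B_K}_\lambda=S P^{L}_\lambda S^{-1}$). Hence the full system of root vectors of $B_K$ is the image under the topological isomorphism $S$ of the full system of root vectors of $L$, and a bounded boundedly invertible operator sends complete systems to complete systems and Riesz (or Schauder) bases to bases of the same type; this is item 3. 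For item 4, a normal operator with discrete spectrum admits an orthonormal basis of eigenvectors by the spectral theorem, its image under $S$ is by definition a Riesz basis, and these images are eigenvectors of $B_K$, which proves item 4.

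The only step requiring genuine care is the similarity identity itself at the level of unbounded operators: one must check that $L_K^{-1}=(I+\overline{KL})L^{-1}$ holds as an equality of bounded operators on $H$ (which rests on $L^{-1}H\subset D(L)=D(KL)$ together with \eqref{eq:1.1}) and that $I+\overline{KL}$ is genuinely invertible with bounded inverse. This is exactly where the full hypothesis $R(K^*)\subset D(L^*)\cap D(L_K^*)$ is used, through Lemma \ref{lem:2.0} and the remark after Lemma \ref{lem:2.2}; once it is in hand, items 1--4 are formal.
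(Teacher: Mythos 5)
Your proof is correct and follows essentially the same route as the paper: the paper's own argument rests on the identity $B_K^{-1}=(I-\overline{KL_K})^{-1}L^{-1}(I-\overline{KL_K})$, which is exactly your similarity $B_K=SLS^{-1}$ with $S=I+\overline{KL}=(I-\overline{KL_K})^{-1}$ read at the level of inverses. The only cosmetic difference is in item 1, where the paper invokes Barnes' relation $\sigma(RS)\setminus\{0\}=\sigma(SR)\setminus\{0\}$ applied to $L_K^{-1}$ and $I-\overline{KL_K}$, while you deduce $\sigma(B_K)=\sigma(L)$ directly from the similarity; both are valid, and your version handles the point $0$ without a separate remark.
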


\begin{proof}
\begin{enumerate}
\item{Note that $B_K^{-1}=L_K^{-1}(I-\overline{KL_K})$, and $(I-\overline{KL_K})L_K^{-1}=L_K^{-1}-K=L^{-1}$. The correctness of the operator $B_K$ is obvious. For bounded operators $R$ and $S$ is known (see {\cite{Barnes}}) the property $\sigma (RS)\setminus \{0\}=\sigma (SR)\setminus \{0\}$. Thus, the item 1 is proved.}
\item{Note that $B_K^{-1}=(I-\overline{KL_K})^{-1}L^{-1} (I-\overline{KL_K})$.  It follows easily from Lemma \ref{lem:2.1} and Lemma \ref{lem:2.2} that the operators $I-\overline{KL_K}$ and $(I-\overline{KL_K})^{-1}$ are bounded and defined on the whole of $H$. It is then obvious that the operators $L^{-1}$ and $B_K^{-1}$ is quasinilpotent (the Volterra) simultaneously. The item 2 is proved.} 
\item{From the known facts of functional analysis (see {\cite{Taldykin}}) imply that the system of root vectors of the operators  $L$ and  $B_K$ are complete (the basis) simultaneously.}
\item{The system of root vectors of the normal discrete correct operator $L$ form an orthonormal basis in $H$. Then the system of root vectors of the correct operator $B_K$ form a Riesz basis in $H$.}
\end{enumerate}
The Theorem \ref{theorem:2.1} is proved. 
\end{proof}

\begin{example}\label{exam:2.1}
In the Hilbert space $L_2(0, 1)$, let us consider the minimal operator $L_0$ generated by  the differentiation operator
\[\widehat{L} y=y'=f \,\,\, \mbox{for all} \,\,\, f \, \in \, L_2(0, 1).\]
Then 
\[D(L_0)=\{y\in W_2^1(0, 1): \,\, y(0)=y(1)=0\}.\]
The action of the maximum operator $\widehat{M}=L_0^*$ has the form
\[\widehat{M} v=-v'=g \quad \mbox{for all} \,\,\, g\, \in \, L_2(0, 1).\]
Then 
\[D(M_0)=\{v\in W_2^1(0, 1): \,\, v(0)=v(1)=0\}.\]
As a fixed boundary correct extension $L$ of $L_0$ we take the operator acting as the maximal operator $\widehat{L} $ on the domain
\[D(L)=\{y\in D(\widehat{L}): \, y(0)=0\}.\]
Then all possible correct restriction $L_K$ of $\widehat{L}$ have the following inverse
\[y=L_K^{-1}f=L^{-1}+Kf=\int_0^xf(t)dt+\int_0^1f(t)\overline{\sigma(t)}dt,\]
where $\sigma(x)\in L_2(0, 1)$ defines the operator $K$.
The domain $D(L_K)$ of $L_K$ is defined as
\[D(L_K)=\{y\in W_2^1(0, 1):\, y(0)=\int_0^1y'(t)\overline{\sigma(t)}dt\}.\]
Then $D(L_K)$ is not dense in $L_2(0, 1)$ if and only if $\sigma(x)\in W_2^1(0, 1)$, $\sigma(1)=0$, and $\sigma(0)=-1$.
If we exclude such $\sigma(x)$ from $L_2(0, 1)$ then there exists $L_K^*$ which have an inverse of the form
\[v=(L_K^*)^{-1}g=(L_K^{-1})^*g=(L^*)^{-1}g+K^*g \quad \mbox{for all} \,\, \,g\in L_2(0, 1).\]
This is a description of inverse operators of all possible correct extensions $L_K^*$ of $M_0$.
Let the condition of Theorem \ref{theorem:2.1} holds. Then $\sigma(x)\in W_2^1(0, 1)$, $\sigma(1)=0$, and $\sigma(0)\neq-1$. Let us construct the following operators
\[ \begin{split}
&\overline{KL}f=-\int_0^1f(t)\sigma'(t)dt, \\
&\overline{KL_K}f=-\dfrac{1}{1+\overline{\sigma(0)}}\int_0^1f(t)\sigma'(t)dt.
\end{split}\]
Note that
\[ \begin{split}
&L_K^* v=-v'(x)+\dfrac{\sigma'(x)}{1+\sigma(0)}v(0)=f(x), \\
&D(L_K^*)=D(L^*)=\{v\in W_2^1(0, 1):\, \,v(1)=0\}.
\end{split}\]
Then the operator $B_K$ has the following form
\[ \begin{split}
&B_Ku=u'(x)-\int_0^1u'(t)\overline{\sigma'(t)}dt=f(x), \\
&D(B_K)=D(L_K)=\{u\in W_2^1(0, 1):\, \,u(0)=\int_0^1u'(t)\overline{\sigma(t)} dt\},
\end{split}\]
where $\sigma(x)\in W_2^1(0, 1)$, $\sigma(1)=0$, and $\sigma(0)\neq-1$.
By virtue of Theorem \ref{theorem:2.1} $B_K$ is a Volterra correct operator.
We know that for a first order differentiation operator there are no Volterra correct restrictions or correct extensions, except the Cauchy problem at some point $x=d $, $0\leq d\leq1$.
But the operator $B_K$ is neither correct restriction of $\widehat{L}$ nor correct extension of $L_0$.
This Volterra problem obtained by the perturbation of the differentiation operator itself and the boundary conditions of Cauchy simultaneously.
\end{example}

\begin{example}\label{exam:2.2}
If in Example \ref{exam:2.1} as a fixed boundary correct operator $L$ we take the operator $\widehat{L}$ with the domain
\[D(L)=\{y\in W_2^1(0, 1): \,\, y(0)+y(1)=0\},\]
then $L$ is a normal operator. In this case, the operator $B_K$ has the form
\[ \begin{split}
&B_Ky=y'(x)-\int_0^1y'(t)\overline{\sigma'(t)}dt=f(x), \\
&D(B_K)=\{y\in W_2^1(0, 1):\, \,y(0)+y(1)=2\int_0^1y'(t)\overline{\sigma(t)} dt\},
\end{split}\]
where $\sigma(x)\in W_2^1(0, 1)$, $\sigma(0)+\sigma(1)=0$, and $\sigma(0)\neq-\frac{1}{2}$.
The operator $B_K$ is correct and the system of root vectors form a Riesz basis in $L_2(0, 1)$.
The eigenvalues of the normal operator $L$ and the correct operator  $B_K$ coincide.
\end{example}

\begin{corollary}\label{cor:2.8}
The results of Theorem \ref{theorem:2.1} are also valid for the operator $B_K^*=L_K^*(I+L^*K^*)$. All four items will take place for a pair of operators $B_K^*$ and $L^*$.
\end{corollary}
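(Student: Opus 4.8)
The plan is to deduce the statement from Theorem~\ref{theorem:2.1} by passing to Hilbert--space adjoints, after first extracting from the proof of that theorem the fact that $B_K$ is similar to $L$ through a bounded, boundedly invertible operator. First I would recall that, under the standing hypothesis $R(K^*)\subset D(L^*)\cap D(L_K^*)$, the proof of Theorem~\ref{theorem:2.1}(2), Lemma~\ref{lem:2.2} and the remark following it give
\[
B_K^{-1}=(I-\overline{KL_K})^{-1}L^{-1}(I-\overline{KL_K}),\qquad P:=(I-\overline{KL_K})^{-1}=I+\overline{KL},
\]
with $P$ and $P^{-1}$ bounded on all of $H$; equivalently $B_K=PLP^{-1}$ and $B_K=PL_K$. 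Since $P$ is bounded with bounded inverse and $L$, $L_K$ are closed and densely defined, passing to adjoints would give $B_K^{*}=(PL_K)^{*}=L_K^{*}P^{*}=L_K^{*}(I+L^{*}K^{*})$ --- exactly the operator in the statement --- and also, reading off $(B_K^{*})^{-1}=(B_K^{-1})^{*}=(P^{*})^{-1}(L^{*})^{-1}P^{*}$, that $B_K^{*}=(P^{*})^{-1}L^{*}P^{*}$. Thus $B_K^{*}$ would be similar, through the bounded boundedly invertible operator $(P^{*})^{-1}=I-L_K^{*}K^{*}$, to $L^{*}$, which by the Preliminaries is a boundary correct extension of $M_0$, i.e. $M_0\subset L^{*}\subset\widehat{M}$.

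With this similarity in hand, I would verify the four items for the pair $(B_K^{*},L^{*})$ using the invariance of the relevant properties under similarity by a bounded boundedly invertible operator and under adjunction. For (1): similar operators have equal spectra, so $\sigma(B_K^{*})=\sigma(L^{*})$; $B_K^{*}$ is correct because $L^{*}$ is; and $(B_K^{*})^{-1}-(L_K^{*})^{-1}=(B_K^{-1}-L_K^{-1})^{*}=-(\overline{KL_K})^{*}(L_K^{*})^{-1}$ is bounded, which displays $B_K^{*}$ as a relatively bounded correct perturbation of the correct operator $L_K^{*}$. For (2): $(L^{*})^{-1}=(L^{-1})^{*}$ and $(B_K^{*})^{-1}=(B_K^{-1})^{*}$, and adjunction preserves compactness and replaces the spectrum by its complex conjugate, so $L^{-1}$ is quasinilpotent (Volterra) if and only if $(L^{*})^{-1}$ is, and, by the similarity, this holds if and only if $(B_K^{*})^{-1}$ is; combining with Theorem~\ref{theorem:2.1}(2) gives the assertion. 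For (3): if $L^{*}$ has discrete spectrum --- equivalently $L$ does, again because $(L^{*})^{-1}=(L^{-1})^{*}$ --- then the root vectors of $B_K^{*}=(P^{*})^{-1}L^{*}P^{*}$ are exactly the images under $(P^{*})^{-1}$ of the root vectors of $L^{*}$, and since a bounded boundedly invertible operator sends complete systems to complete systems and Schauder bases to Schauder bases, the root system of $B_K^{*}$ is complete (respectively a basis) in $H$ if and only if that of $L^{*}$ is. For (4): if, in addition, $L$ is normal with discrete spectrum then so is $L^{*}$, its root system is an orthonormal basis of $H$, hence its image under $(P^{*})^{-1}$ --- which is the root system of $B_K^{*}$ --- is a Riesz basis.

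The one point I expect to require care is conceptual rather than computational: $B_K^{*}=L_K^{*}(I+L^{*}K^{*})$ is \emph{not} itself produced by the construction of Theorem~\ref{theorem:2.1} applied to the dual configuration $M_0\subset L_K^{*}\subset\widehat{M}$, since in that construction the bounded factor would sit on the opposite side of $L_K^{*}$ (it would yield $(I+\overline{K^{*}L^{*}})L_K^{*}$, a different operator, albeit one with the same spectrum). So one must not simply re-run Theorem~\ref{theorem:2.1} in the adjoint setting; the correct route is the similarity $B_K^{*}\sim L^{*}$ recorded above, together with the elementary stability of all four invariants under such similarity and under adjunction. The remaining verifications --- boundedness of $L_K^{*}K^{*}=(\overline{KL_K})^{*}$ and of $I+L^{*}K^{*}=P^{*}$, and the compatibility of the various domains under adjunction --- are routine and are already contained in Lemma~\ref{lem:2.0}, Lemma~\ref{lem:2.1} and Lemma~\ref{lem:2.2}.
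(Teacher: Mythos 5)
Your argument is correct and follows what is clearly the intended route: the paper states this corollary without proof, and the natural justification is exactly the one you give, namely taking adjoints in $B_K=(I+\overline{KL})L_K$ to get $B_K^*=L_K^*(I+L^*K^*)$ and in $B_K^{-1}=(I-\overline{KL_K})^{-1}L^{-1}(I-\overline{KL_K})$ to get the bounded similarity $(B_K^*)^{-1}=(I+L^*K^*)(L^*)^{-1}(I+L^*K^*)^{-1}$, from which all four items transfer since spectrum, quasinilpotency, compactness, completeness, basis and Riesz-basis properties are stable under adjunction and under similarity by a bounded boundedly invertible operator. Your closing caveat --- that $B_K^*$ is not the operator produced by re-running Theorem \ref{theorem:2.1} in the dual configuration $M_0\subset L_K^*\subset\widehat{M}$, because the bounded factor lands on the other side of $L_K^*$ --- is accurate and worth stating, since it explains why the corollary needs this separate (if short) argument at all.
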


\begin{remark}\label{rem:2.9}
The results of Examples \ref{exam:2.1}--\ref{exam:2.2} are also valid for the operator $B_K^*$.
\end{remark}
\[ \begin{split}
&B_K^*v=-\dfrac{d}{dx}[v(x)-\sigma'(x)\int_0^1v(t)dt]=f, \\
&D(B_K^*)=\{v\in L_2(0, 1):\, \,v(x)-\sigma'(x)\int_0^1v(t)dt\in D(L^*)\},
\end{split}\]
where $\sigma(x)\in W_2^1(0, 1)$, $\sigma(1)=0$, and $\sigma(0)\neq-1$, in the case of Example \ref{exam:2.1}, and $\sigma(x)\in W_2^1(0, 1)$, $\sigma(0)+\sigma(1)=0$, and $\sigma(0)\neq-\frac{1}{2}$, in the case of Example \ref{exam:2.2}.
We recall that the conditions $\sigma(0)\neq-1$ and $\sigma(0)\neq-\frac{1}{2}$ provide the density of the domain $D(L_K)$ in $H$.

\section{Main results}
\label{sec:3}
In the Hilbert space $L_2(\Omega)$, where $\Omega$ is a bounded domain in $\mathbb R^m$ with an infinitely smooth boundary $\partial\Omega$, let us consider the minimal $L_0$ and maximal $\widehat{L}$ operators generated by the Laplace operator
\begin{equation}\label{eq:2.4}
-\Delta u=-\biggl(\frac{\partial^2 u}{\partial{x_1^2}}+\frac{\partial^2 u}{\partial{x_2^2}}+\cdots+\frac{\partial^2 u}{\partial{x_m^2}}\biggr).
\end{equation}

The closure $L_0$, in the space $L_2(\Omega)$ of the Laplace operator \eqref{eq:2.4} with the domain $C_0^\infty (\Omega)$, is \textit{the minimal operator corresponding to the Laplace operator}.
The operator $\widehat{L}$, adjoint to the minimal operator $L_0$ corresponding to the Laplace operator, is \textit{the maximal operator corresponding to the Laplace operator} (see \cite{Hormander}). Note that
 \[D(\widehat{L})=\{u\in L_2(\Omega): \; \widehat{L}u =-\Delta u \in L_2(\Omega)\}.\]
Denote by $L_D$ the operator, corresponding to the Dirichlet problem with the domain
\[ D(L_D)=\{u\in W_2^2(\Omega): \; u|_{\partial\Omega}=0\}. \]
Then, by virtue of \eqref{eq:1.1}, the inverse operators $L^{-1}$ to all possible correct restrictions of the maximal operator $\widehat{L}$ corresponding to the Laplace operator \eqref{eq:2.4} have the following form:
\[u\equiv L^{-1}f=L_D^{-1}f+Kf,\]
where, by virtue of \eqref{eq:1.1}, $K$ is an arbitrary linear operator bounded in $L_2(\Omega)$ with 
\[R(K)\subset \mbox{Ker}\,\widehat{L} =\{u\in L_2(\Omega): \: -\Delta u=0\}.\]
Then the direct operator $L$ is determined from the following problem:
\[\widehat{L}u= -\Delta u=f, \quad f\in L_2(\Omega),\]
\[D(L)=\{ u\in D(\widehat{L}) : \; [(I-K\widehat{L})u]|_{\partial\Omega}=0 \},\]
where $I$ is the identity operator in $L_2(\Omega)$. 
There are no other linear correct restrictions of the operator $\widehat{L}$ (see \cite{Biyarov1}).
The operators $(L^*)^{-1}$, corresponding to the adjoint operators $L^*$
\[v= (L^*)^{-1}g=L_D^{-1}g+K^*g,\]
describe the inverse operators to all possible correct extensions of $L_0$ if and only if $K$ satisfies the condition (see \cite{Biyarov1}):
\[\mbox{Ker}\,(I+K^*L^*)=\{0\}. \]
Note that the last condition is equivalent to the following: $\overline{D(L)}= L_2(\Omega)$.

We apply Theorem \ref{theorem:2.1} to the particular case when
\[Kf=\omega(x) \iint\limits_\Omega f(\xi)\overline{g(\xi)}d\xi,\quad x,\;\xi \in \Omega\subset \mathbb R^m,\]
where $\omega(x)$ is a harmonic function from $L_2(\Omega)$, and $g(x)\in L_2(\Omega)$.
\[K^*f=g(x) \iint\limits_\Omega f(\xi)\overline{\omega(\xi)}d\xi.\]
From the conditions of Theorem \ref{theorem:2.1} it follows that $g(x)\in W_2^2(\Omega)$, $g(x)\mid_{\partial \Omega}=0$, and
\[\iint \limits_\Omega(\Delta g)(\xi)\overline{\omega(\xi)}d\xi\neq1.\]
Then
\[B_Ku=-\Delta u-\omega(x)\iint \limits_\Omega(\Delta u)(\xi)(\Delta \overline{g})(\xi)d\xi=f(x),\quad \mbox{for all}\, \, f\in L_2(\Omega),\]
\[D(B_K)=\Bigl\{u\in W_2^2(\Omega):\, \bigl(u(x)+\omega(x)\iint \limits_\Omega(\Delta u)(\xi)\overline{g(\xi)} d\xi\bigr)\mid_{\partial\Omega}=0\Bigr\}\]
We obtained a relatively compact perturbation $B_K$ of $L$ which has the same eigenvalues as the Dirichlet problem $L_D$. The system of root vectors of $B_K$ form a Riesz basis in $L_2(\Omega)$. 
If $\{v_k\}$ are an orthonormal system of eigenfunctions of $L$ (the Dirichlet problem), then the system of eigenvectors $\{u_k\}$ of $B_K$ have the form
\[u_k=(I+\overline{KL})v_k=v_k(x)+\omega(x)\iint\limits_\Omega v_k(\xi)(\Delta\overline{g})(\xi)d\xi, \quad k=1, 2, \ldots\]

Consider a more visual case when $m=2$, that is, $\Omega\subset \mathbb R^2$. To do this, we set the operator $K$ using the functions $g(x)$ in the following form: let $z_1, z_2, \ldots, z_n=x_1^{(n)}+ix_2^{(n)}$ points lying strictly inside the domain $\Omega$. 
We take a holomorphic function $F(z)\in L_2(\Omega)$ in the domain $\Omega$ such that $F(z_k)=0,\,\, k=1, 2,\ldots, n$, with multiplicities $m_k$. As functions $g(x_1, x_2)$ we take the solution of the following Dirichlet problem
\begin{equation}\label{eq:2.5}
-\Delta g=\ln \vert F(z)\vert, \quad g\vert_{\partial\Omega}=0.
\end{equation}
%Берём голоморфную в области $\Omega$ функцию $F(z)\in L_2(\Omega)$ such that $F(z_k)=0\, k=1, 2,\ldots, n$, с кратностьям $m_k$. В качестве функций $g(x_1, x_2)$ берём решение следуюший задачи Дирихле
Then, near the point where $F(z)\neq 0$ there is an  analytic branch $\Phi(z)$ of the function $\ln F$, hence $\ln|F|=\operatorname{Re} \Phi$  is a harmonic function. In a neighborhood of $z_k$ we can write
\[ 
\begin{split}
&F(z)=(z-z_k)^{m_k} \Phi(z), \\
&\ln |F(z)|=m_k\ln |z-z_k| +\ln |\Phi(z)|,
\end{split}
\]
where $\Phi(z_k)\neq0, \, k=\overline{1, n}$. Then by Theorem 3.3.2 (see \cite{Hormander})  and the harmonicity of the functions $\ln |\Phi(z)|$ we get that
\[\Delta\ln|F|=2\pi m_k\delta(z-z_k)\]
in the neighborhood.
We verify the condition of Theorem \ref{theorem:2.1}, taking into account that $\Omega\subset \mathbb R^2$ and
%Повторим рассуждения из примера \ref{exam:2.3}, с учетом, что $\Omega\subset \mathbb R^2$ и
\[Kf=w(x)\iint\limits_\Omega f(\xi)\overline{g(\xi)} d\xi, \quad x, \xi \in \Omega\subset \mathbb R^2, \]
where $w(x)$ is a harmonic function from $L_2(\Omega)$ and $g(x)$ is a solution of the Dirichlet problem \eqref{eq:2.5}. Then $g(x)\in W_2^2(\Omega), \,\, g(x)|_{\partial\Omega}=0$, and
\[\iint \limits_\Omega \ln |F(\zeta)| \overline{w(\xi)} d\xi\neq1,\]
where $\zeta=\xi_1+i\xi_2$ and $\xi=(\xi_1, \xi_2)$.
If we denote by $T$ the next bounded operator in $L_2(\Omega)$
\[Tu=w(x)\int\limits_{\partial\Omega}\left[\frac{\partial u(\xi)}{\partial n}\ln |F(\zeta)|-u(\xi)\frac{\partial}{\partial n}\ln |F(\zeta)|\right] ds,\]
we get the following
\[B_K u=-\Delta u+2\pi w(x)\sum \limits_{k=1}^{n}m_k u(x^{(k)})-Tu=f(x),\]
where $x^{(k)}=(x^{(k)}_1, x^{(k)}_2)\in \Omega\subset\mathbb{R}^2$.
The domain of the operator $B_K$ has the form
\[ 
\begin{split}
D(B_K)=\bigg\{u\in &W_2^2(\Omega):\\
&\bigg[u(x)+w(x)\int\limits_{\partial\Omega}u(\xi)\frac{\partial\ln|F(\zeta)|}{\partial n}ds-w(x)\iint \limits_{\Omega}u(\xi)\ln|F(\zeta)| d\xi\bigg]\bigg|_{\partial\Omega}=0\bigg\}.
\end{split}
\]
We obtained a relatively bounded perturbation $B_K$ of $L_D$ which has the same eigenvalues as the Dirichlet problem $L_D$. The system of root vectors of  $B_K$ forms a Riesz basis in $L_2(\Omega)$. If $\{v_k\}$ are an orthonormal system of eigenfunctions of $L_D$, then the system of eigenvectors $\{u_k\}$ of $B_K$ have the form
\[u_k=(I+\overline{KL})v_k=v_k(x)+w(x)\iint\limits_\Omega v_k(\xi)\ln\overline{|F(\zeta)|}d\xi, \quad k=1, 2, \ldots.\]

Thus, we constructed an example of a singular perturbation of the Dirichlet problem for the Laplace operator with a basic system of root vectors. This perturbation is a valid non-self-adjoint operator, which is not a restriction of the maximal operator $\widehat{L}$ and is not an extension of the minimal operator $L_0$.

Using the properties of subharmonic functions, it is easy to obtain a similar result in the case of $n>2$.
%Используя свойства субгармонических функций легко получить аналогичный результат в случае 

%Тем самым мы построили пример сингулярного возмущения задачи Дирихле для оператора Лапласа с базисной системой корневых векторов. Это возмущение является корректным несамосопряженным оператором, которое не является сужением максимального оператора $\widehat{L}$ и не является расширением минимального оператора $L_0$.

%\input{referenc}

\end{document}